\documentclass[a4paper,reqno,12pt]{amsart}

\parindent=15pt
\parskip=3pt
\setlength{\textwidth}{7in}
\setlength{\oddsidemargin}{-24pt}
\setlength{\evensidemargin}{-24pt}
\setlength{\textheight}{9.2in}
\setlength{\topmargin}{-5pt}

\usepackage{graphicx}
\usepackage{mathrsfs}
\usepackage{amsfonts}
\usepackage{amssymb}
\usepackage{amsmath}
\usepackage{amsthm}
\usepackage{amscd}
\usepackage[all,2cell]{xy}
\usepackage{color}
\usepackage[pagebackref,colorlinks]{hyperref}
\usepackage{enumerate}
\usepackage[normalem]{ulem}
\usepackage{mathrsfs}
\usepackage{lineno}
\usepackage{bm}
\usepackage{bbm}

\usepackage{comment}
%\usepackage{hyperref}
%\usepackage{showkeys}
%\UseAllTwocells \SilentMatrices

\numberwithin{equation}{section}

\usepackage{comment}

\theoremstyle{plain}
\newtheorem{thm}{Theorem}[section]

\newtheorem{lemma}[thm]{Lemma}
\newtheorem{conjecture}[thm]{Conjecture}
\newtheorem{question}[thm]{Question}

\theoremstyle{definition}
\newtheorem{deff}[thm]{Definition}
\newtheorem{example}[thm]{Example}
\theoremstyle{remark}
\newtheorem{rmk}[thm]{\bf Remark}

\newcommand{\K}{\mathsf{k}}

\def\M{\mathbb{M}}

\def\-{\text{-}}

\newcommand{\gr}{\operatorname{gr}}

\begin{document}

\title[Distinguishing  Leavitt algebras among Leavitt path algebras]{Distinguishing  Leavitt algebras among Leavitt path algebras of finite graphs by Serre property}

\author{Roozbeh Hazrat}
\address{Centre for Research in Mathematics and Data Science,
Western Sydney University,
Australia}\email{r.hazrat@westernsydney.edu.au}

\author{Kulumani M. Rangaswamy}
\address{Department of Mathematics\\University of Colorado\\Colorado Springs, CO 80918, USA}
\email{krangasw@uccs.edu}

\subjclass[2010]{}

\keywords{Serre's conjecture, Leavitt path algebra, graph monoid, talented monoid}

\date{\today}

\begin{abstract} 
Two unanswered questions in the heart of the theory of Leavitt path algebras are whether Grothendieck group $K_0$ is a complete invariant for the class of unital 
purely infinite simple algebras and, a weaker question, whether $L_2$ (the Leavitt path algebra associated to a vertex with two loops) and its Cuntz splice algebra $L_{2-}$ are isomorphic. A positive answer to the first question implies the latter. In this short paper, we raise and investigate another question, the so-called Serre's conjecture, which sits in between of the above two questions: A positive answer to the classification question implies Serre's conjecture which in turn implies $L_2 \cong L_{2-}$. Along the way, we give new easy to construct algebras having stably free but not free modules. 

\end{abstract}

\maketitle

\section{Introduction}

In his article ``Faisceaux Algebriques Coherents'' published in Annals of Maths, 1955, J.P. Serre asked whether  
finitely generated projective modules over the polynomial ring $A=\K[x_{1},\cdots ,x_{n}]$, with coefficients in a field $\K$, are free. This became known as \emph{Serre's conjecture}. Serre's conjecture induced intensive research activity in algebraic geometry. There is even a 440 page book by T.Y. Lam~\cite{lam} devoted entirely to the origins and implications of this conjecture. The fact that projective modules over a field $\K$, the polynomial ring $\K[x]$ and the Laurent ring $\K[x,x^{-1}]$ are free follows from the classical result that projective modules over PID are free. 

The second named author learned about Serre's conjecture around the time that he finished his undergraduate degree and his mentor C. Seshadri proved the first important result in this direction in 1958, that Serre's conjecture holds for the polynomial ring $\K[x_1,x_2]$. Many attempts ensued. Finally in 1976, Daniel Quillen in USA and, independently, Andrei Suslin in the Soviet Union showed that Serre's conjecture holds for any $n\geq 1$~\cite{lam}.

The question is also closely related to rings which have stably free (projective) modules which are not free. Constructing rings with stably free modules which are not free is not immediate. There are very interesting constructions by Kaplansky (in the case of commutative rings) and 
Ojanguren and Sridharan in the non-commutiave case, which also show that polynomial ring $D[x_1,\dots,x_n]$, with coefficients in a (non-commutative) division ring $D$, has stably free projective modules which are not free (in contrast to Quillen-Suslin's theorem that all projective modules over $\K[x_1.\dots,x_n]$, with coefficients in a field $\K$, are free) (see~\cite[Example 4.8 (i) in Section 4B]{magurn}). There is also a construction by P.M. Cohn which by the current terminology is that of the Cohn path algebra associated to a vertex and two loops (see~\cite[Example 4.8 (iii)]{magurn})). 
%We refer the reader to Magurn's book and references there for these constructions~\cite{magurn}. 

In this short note, we investigate Serre's conjecture for the rings arising from Leavitt path algebras. We remark that Serre's conjecture has been looked at in some classes of other non-commutative rings such as Weyl algebras (see~\cite[VIII.8]{lam}). We first note that the Serre's conjecture holds for the Leavitt algebra $L_n, n \geq 2$,   that were originally defined and studied by William Leavitt.  Our investigation in this paper leads us to believe that for a unital Leavitt path algebra $L_\K(E)$ with coefficients in a field $\K$, if all finitely generated projective modules are free, then $L_\K(E)$ should be of the form $L_n$ (see Conjecture~\ref{conjfree}). 

The algebras $L_n$, for $n\geq 2$, constructed by Leavitt  can be obtained as Leavitt path algebras associated to graphs with one vertex and $n$ loops. We further set $L_0=\K$ and $L_1=\K[x,x^{-1}]$, which are Leavitt path algebras with one vertex and zero and one loop, respectively.

The precise conjecture we propose, if it holds, will distinguish classical Leavitt algebras from the rest of the large class of Leavitt path algebras. We use the notation $\mathbb N$ for natural numbers (including zero).

\begin{conjecture}\label{conjfree}
Suppose $E$ is a finite graph and $L_\K(E)$ its associated Leavitt path algebra. Then every finitely generated right projective $L_\K(E)$-module is free if and only if $L_\K(E)\cong L_n$, for some $n\in \mathbb N$. 
\end{conjecture}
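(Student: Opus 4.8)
The plan is to translate the freeness condition into the language of the graph monoid $M_E$ and then treat the two implications separately. Recall the Ara--Moreno--Pardo isomorphism $V(L_\K(E))\cong M_E$, which carries the class $[vL_\K(E)]$ of the indecomposable projective at a vertex $v$ to the generator $[v]$, and note that since $E$ is finite, $L_\K(E)$ is unital with $[L_\K(E)]=\sum_{v\in E^0}[v]=:u$. Under this dictionary, the statement that every finitely generated projective right module is free says precisely that $M_E$ is generated as a monoid by the single element $u$. For the easy implication $L_\K(E)\cong L_n\Rightarrow$ freeness I would compute $V(L_n)$ directly: for $n=0$ and $n=1$ the algebras $\K$ and $\K[x,x^{-1}]$ are a field and a PID, so finitely generated projectives are free; for $n\ge 2$ the one-vertex, $n$-loop graph gives $M_E\cong\langle u\mid u=nu\rangle$, which is generated by $u=[L_n]$, so again every class is a multiple of $[L_n]$ and every finitely generated projective is free.

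For the converse I would first extract an elementary monoid lemma. Since $M_E=\langle u\rangle$, each vertex class lies in $\langle u\rangle$, say $[v]=a_v\,u$ with $a_v\ge 1$ (as $[v]\neq 0$), whence $u=\sum_v[v]=\bigl(\sum_v a_v\bigr)u$. If $E$ has at least two vertices, then $\sum_v a_v\ge 2$ and we obtain a relation $u=ku$ with $k\ge 2$; if $E$ has a single vertex, then $u=[v]$ and the only relations come from the loops at $v$, again giving $u=ku$ where $k$ is the number of loops. In every case $M_E$ is a \emph{monogenic} monoid of index one, so it is either free, $M_E\cong\N$, or it satisfies $u=ku$ for some $k\ge 2$, in which case its nonzero part is the cyclic group $\Z/(k-1)\Z$ and $M_E\cong\langle u\mid u=ku\rangle=V(L_k)$ as pointed monoids.

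It then remains to identify the algebra in each case. When $M_E\cong\N$, the computation above forces $E$ to be a single vertex with at most one loop, so $L_\K(E)$ is literally $\K=L_0$ or $\K[x,x^{-1}]=L_1$ and there is nothing further to prove. When $u=ku$ with $k\ge 2$, every nonzero element of $M_E$ is properly infinite and $M_E$ has no nontrivial order ideals; by the purely-infinite-simplicity criterion for Leavitt path algebras this makes $L_\K(E)$ a unital purely infinite simple algebra with $(K_0(L_\K(E)),[1])\cong(\Z/(k-1)\Z,\bar 1)\cong(K_0(L_k),[1])$. At this point the desired conclusion $L_\K(E)\cong L_k$ is exactly an instance of the classification question recalled in the abstract, namely whether $(K_0,[1])$ is a complete invariant for unital purely infinite simple Leavitt path algebras.

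The main obstacle is therefore concentrated entirely in this last step: the reduction above is unconditional and elementary, but passing from the agreement of the pointed Grothendieck groups to an actual algebra isomorphism is precisely the open classification conjecture, and for $k=2$, where $K_0=0$, it already specializes to the statement $L_2\cong L_{2-}$. Accordingly I would state the theorem conditionally---Serre's conjecture for $L_\K(E)$ holds assuming the classification conjecture---which also explains the logical position asserted in the abstract. An unconditional proof would require either settling the classification in this restricted monogenic setting, or producing an explicit finite sequence of graph moves (out-/in-splittings, reductions, Cuntz splices) taking $E$ to the one-vertex $n$-loop graph, and I expect the latter to be as hard as the classification itself.
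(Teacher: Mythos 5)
Your proposal is correct and follows essentially the same route as the paper: you translate freeness into the graph monoid being generated by $1_E$ (this is Lemma~\ref{thmfree}), derive the trichotomy $\K$, $\K[x,x^{-1}]$, or purely infinite simple with $\big(K_0(L_\K(E)),[L_\K(E)]\big)\cong(\mathbb Z/n\mathbb Z,1)$ (this is Theorem~\ref{thmpurelyinf}), and then, exactly as in Theorem~\ref{calssthm}, correctly recognize that the final identification $L_\K(E)\cong L_n$ is precisely the open classification Question~\ref{questionpure}, so the statement can only be proved conditionally and remains a conjecture. The only (cosmetic) difference is that you black-box a monoid-theoretic criterion for pure infinite simplicity (proper infiniteness of all nonzero elements plus no nontrivial order ideals), whereas the paper verifies the graph-theoretic criterion explicitly by ruling out sinks and cycles without exits via the relation $v=k1_E$.
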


The above conjecture is closely related to the algebraic Kirchberg-Phillips question which is said to be the most compelling unresolved question in the subject of Leavitt path algebras~\cite[\S 7.3.1]{AAS}. %The question asks whether the following statement is valid. 

\begin{question}[Classification question]\label{questionpure}
Let $E$ and $F$ be finite graphs such that  $L_\K(E)$ and $L_\K(F)$ are purely infinite simple algebras. Is it true that  $L_\K(E)\cong L_\K(F)$ as $\K$-algebras if and only if there is an isomorphism $\phi: K_0(L_\K(E)) \rightarrow K_0(L_\K(F))$ such that 
$\phi([L_\K(E)])=[L_\K(F)]$?
\end{question}

The above question is in fact a theorem in the setting of graph $C^*$-algebras. Similar to the analytic case, the important test case is the comparison of the algebras $L_2$ and its Cuntz splice ${L_2}_{-}$.  
Consider the graph $E_2$ and the graph ${E_2}_{-}$ obtained by performing a Cuntz splice to $E_2$.

\begin{equation}\label{cuntzsplice}
E_2 \qquad \xymatrix{ 
		u \ar@(dl,ul) \ar@(dr,ur) 
	} 
	\qquad
	\qquad
	\quad
	\qquad {E_2}_{-} \qquad
	\xymatrix{ 
		u \ar@(l,u) \ar@(l,d) \ar@/^0.5em/[r] & v \ar@/^0.5em/[l] \ar@/^0.5em/[r] \ar@(ul,ur) & w \ar@/^0.5em/[l] \ar@(ur,dr)
	}
\end{equation}
\smallskip 

The Leavitt path algebra $L_\K(E_2)$ is the Leavitt algebra $L_2$, and the Leavitt path algebra $L_\K({E_2}_{-})$ is often denoted ${L_2}_{-}$.

%Sharma, Ojunguran and Sridharan (1971): Serre's conjecture is false for $%
%K[x_{1},\cdot \cdot \cdot ,x_{n}]$ \textbf{if }$K$\textbf{\ is a division
%ring and }$n\geq 2$\textbf{.}

\begin{question}[Splice question]\label{questionpure1}
Consider the Leavitt path algebras $L_2$ and ${L_2}_{-}$. Are these algebras isomorphic? 
\end{question}

In Theorem~\ref{calssthm} we show Conjecture~\ref{conjfree} sits between the two above questions. Namely a positive answer to Question~\ref{questionpure} implies Conjecture~\ref{conjfree}, which in return gives a positive answer to Question~\ref{questionpure1}. This means, if we could find a Leavitt path algebra which is not of the form $L_n$ whose finitely generated projective modules are all free, then the algebraic Kirchberg-Phillips Question~\ref{questionpure} fails to be true. 

In fact it would be interesting to determine the class of Leavitt path algebras whose projective modules are stably free as well as the class of Leavitt path algebras whose stably free modules are free (i.e., are Hermite rings~\cite{lam}).  Recall that for a unital ring $A$, a finitely generated projective $A$-module $P$ is \emph{stably free} if $P\oplus A^n\cong A^m$.
The latter question has been answered in \cite{nam}, when the Leavitt path algebras are assumed to have IBN, i.e., the Hermite rings were defined in the sense of Cohn, meaning they have IBN and stably free modules are free. The same questions can be asked for the graded version of these statements. Examples~\ref{exam1} and \ref{exam2}, show all these situations can arise in the large class of unital Leavitt path algebras and they remain to be systematically classified in this setting.

%\noindent\rule{15cm}{1pt}

\section{Preliminary: Graphs, graph monoids and Leavitt path algebras}

Let $E=(E^{0}, E^{1}, r, s)$ be a directed graph, where $E^{0}$ and $E^{1}$ are
sets of vertices and edges and $s,r$ are source and range maps from $E^1$ to $E^0$, respectively.  A graph $E$ is said to be \emph{row-finite} if for each vertex $v\in E^{0}$,
$|s^{-1}(v)|< \infty$. If $s^{-1}(v)$ is empty we say $v$ is a sink. One can associate an algebra $L_\K(E)$, with coefficients in a field $\K$ to the graph $E$ which is  called the Leavitt path algebra. We refer the reader to the book of Abrams, Ara and Siles Molina \cite{AAS} for theory of Leavitt path algebras, its relation with the algebras defined by William Leavitt and all the standard terminologies we use here. 

We recall the notion of graph monoids and talented monoids as we heavily use them in this note.

\begin{deff}\label{def:graphmonoid}
 Let $E$ be a row-finite graph. 
 \begin{enumerate}
 
\item The \emph{graph monoid} $M_E$, is the commutative 
monoid generated by $\{v \mid v\in E^0\}$, subject to
\[v=\sum_{e\in s^{-1}(v)}r(e)\]
for every $v\in E^0$ that is not a sink.

\smallskip

\item The \emph{talented monoid} $T_E$, is the commutative 
monoid generated by $\{v(i) \mid v\in E^0, i\in \mathbb Z\}$, subject to
\[v(i)=\sum_{e\in s^{-1}(v)}r(e)(i+1)\]
for every $i \in \mathbb Z$ and every $v\in E^{0}$ that is not a sink. The additive group $\mathbb{Z}$ of integers acts on $T_E$ via monoid automorphisms by shifting indices: For each $n,i\in\mathbb{Z}$ and $v\in E^0$, define ${}^n v(i)=v(i+n)$, which extends to an action of $\mathbb{Z}$ on $T_E$. Throughout we will denote elements $v(0)$ in $T_E$ by $v$.

\end{enumerate}
\end{deff}

There is an explicit description of the congruence on the free abelian
monoid given by the defining relations of $M_{E}$ \cite[\S 3.6]{AAS}. Let $\mathbb F_E$ be the free abelian monoid on
the set $E^{0}$. The nonzero elements of $\mathbb F_E$ can be written uniquely up to
permutation as $\sum_{i=1}^{n}v_{i}$, where $v_{i}\in E^{0}$. Define a binary relation
$\rightarrow_{1}$ on $\mathbb F_E\setminus\{0\}$ by 
\begin{equation}\label{hfgtrgt655}
\sum_{i=1}^{n}v_{i}\longrightarrow_{1}\sum_{i\neq
j}v_{i}+\sum_{e\in s^{-1}(v_{j})}r(e),
\end{equation}
whenever $j\in \{1, \cdots, n\}$ and 
$v_{j}$ is not a sink. Let $\rightarrow$ be the transitive and reflexive closure of $\rightarrow_{1}$
on $\mathbb F_E\setminus\{0\}$ and $\sim$ the congruence on $\mathbb F_E$ generated by the relation $\rightarrow$.
Then $M_{E}=\mathbb F_E/\sim$.

Throughout $a \rightarrow_{1} b$ is called a
 \emph{transformation} of $a$ to $b$ in $\mathbb F_E\setminus\{0\}$.  The following lemma is crucial for our work and we frequently use it throughout the article. For the proof see~\cite[\S 3.6]{AAS}.

\begin{lemma}[The Confluence Lemma]\label{aralem6}
Let $E$ be a row-finite graph, $\mathbb F_E$ the free abelian monoid generated by $E^0$ and $M_E$ the graph monoid of $E$. For $a, b \in \mathbb F_E \backslash \{ 0 \}$, we have   $a=b$ in $M_E$ if and only if there is $c \in \mathbb F_E \backslash  \{0 \}$ such that 
$a \rightarrow c$ and $b\rightarrow c$. 

\end{lemma}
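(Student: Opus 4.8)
The plan is to prove the two implications separately, with the forward direction (a common reduct forces equality in $M_E$) being immediate and the converse resting on a confluence, i.e.\ Church--Rosser, argument for the reduction relation $\rightarrow_1$.

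First I would dispose of the easy direction. A single step $x \rightarrow_1 y$ replaces an occurrence of a non-sink vertex $v$ by $\sum_{e\in s^{-1}(v)}r(e)$, which is precisely one of the defining relations of $M_E$; hence each such step is an identity in $M_E$, so $\rightarrow$ is contained in the congruence $\sim$. Consequently, if $a\rightarrow c$ and $b\rightarrow c$, then $a=c=b$ in $M_E$.

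The substance is the converse, where the key is the \emph{diamond property} of $\rightarrow_1$: if $x\rightarrow_1 y$ and $x\rightarrow_1 z$ with $y\neq z$, then there is $d$ with $y\rightarrow_1 d$ and $z\rightarrow_1 d$. To see this, write $x=v_1+\cdots+v_n$ as a multiset and observe that a reduction is determined by the position $j$ of the expanded non-sink vertex. If the two reductions expand the same position then $y=z$; if they expand distinct positions $j\neq k$, then the occurrence of $v_k$ survives in $y$ and the occurrence of $v_j$ survives in $z$, so expanding the surviving occurrence in each yields the common element $d$ obtained from $x$ by simultaneously replacing $v_j$ and $v_k$. Since a non-sink vertex has $s^{-1}(v)\neq\varnothing$ in a row-finite graph, every reduction outputs a nonempty sum, so all elements produced remain in $\mathbb F_E\setminus\{0\}$.

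Next I would bootstrap the one-step diamond property to full confluence of $\rightarrow$ (the reflexive-transitive closure of $\rightarrow_1$): if $a\rightarrow y$ and $a\rightarrow z$, then they have a common reduct. This follows by a standard double induction on the lengths of the two reduction sequences, closing each local peak by the diamond property; no termination hypothesis is needed, since the diamond closes each peak in a single step on each side. Finally, the \emph{Church--Rosser} theorem for the confluent relation $\rightarrow$ gives the statement: an arbitrary $\sim$-zigzag $a=x_0\leftrightarrow x_1\leftrightarrow\cdots\leftrightarrow x_m=b$ is collapsed by induction on $m$, using confluence to merge each consecutive peak $\leftarrow\cdot\rightarrow$ into a common descendant, until $a$ and $b$ are both seen to reduce to a single $c$. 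Along the way one checks that $\rightarrow_1$ is compatible with addition, so the equivalence relation generated by $\rightarrow$ is automatically the congruence $\sim$, and $a=b$ in $M_E$ is genuinely the same as $a\sim b$. The main obstacle is conceptual rather than computational: recognizing that the defining relations should be oriented into a rewriting system enjoying the diamond property, after which both the statement and its proof are essentially forced; the only points demanding care are the clean case analysis in the diamond property (same position versus distinct positions) and the verification, via row-finiteness and the non-sink hypothesis, that reductions never leave $\mathbb F_E\setminus\{0\}$.
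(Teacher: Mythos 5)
Your proof is correct and takes essentially the same route as the proof the paper relies on: the paper itself gives no argument but defers to~\cite[\S 3.6]{AAS}, where the statement is established exactly by orienting the defining relations into the rewriting relation $\rightarrow_1$, proving the one-step diamond property, bootstrapping to confluence of $\rightarrow$, and collapsing a $\sim$-zigzag Church--Rosser style, with the compatibility of $\rightarrow_1$ with addition identifying the equivalence closure with the congruence $\sim$. So your proposal matches the cited proof in both structure and all the points needing care (same-position versus distinct-position case analysis, and reductions staying in $\mathbb F_E\setminus\{0\}$).
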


It was proved in~\cite[Theorem 3.2.5]{AAS}, using Bergman's machinery, that there is a monoid isomorphism  
\begin{align}\label{graphmonoid}
\phi:M_E &\longrightarrow \mathcal V (L_\K(E))\\
v &\longmapsto [vL_\K(E)],\notag
\end{align}
where $\mathcal V(L_\K(E))$ is the monoid of isomorphism classes of finitely generated right projective $L_\K(E)$-modules. Thus the group completion of $M_E$ retrieves the Grothendieck group $K_0(L_\K(E))$. When $E$ is finite, we denote $1_E:=\sum_{v\in E^{0}} v$ in $M_E$. In (\ref{graphmonoid}), we have $\phi(1_E)=[L_\K(E)]$. 

A similar result can be written for the graded setting (see~\cite{HazratLi}):  There is a $\mathbb Z$-monoid isomorphism  
\begin{align}\label{grgraphmonoid}
\phi:T_E &\longrightarrow \mathcal V^{\gr} (L_\K(E))\\
v(i) &\longmapsto [(vL_\K(E)(i)],\notag
\end{align}
where $\mathcal V^{\gr}(L_\K(E))$ is the monoid of graded isomorphism classes of finitely generated graded right projective $L_\K(E)$-modules. Thus the group completion of $T_E$ retrieves the graded Grothendieck group $K^{\gr}_0(L_\K(E))$ (\cite[\S 7.3.4]{AAS}).

\section{Results}

Let $A$ be a unital ring. It is easy to observe that any finitely generated (right) projective  $A$-module is free if and only if the monoid homomorphism 
$\mathbb N \rightarrow \mathcal V(A); 1 \mapsto [A]$, is surjective. In the case of Leavitt path algebras, combining this with the correspondence~(\ref{graphmonoid}), we obtain the following statement 
that will be used throughout. Recall that for a finite graph $E$, we denote $1_E:=\sum_{v\in E^{0}} v$ in $M_E$.

\begin{lemma}\label{thmfree}
Let $E$ be a finite graph and $M_E$ its associated graph monoid. Then any finitely generated projective $L_\K(E)$-module is free if and only if for any $v\in E^0$, $v=k 1_E$, for some $0 \not =  k\in \mathbb N$. 
\end{lemma}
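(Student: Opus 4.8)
The plan is to translate the module-theoretic statement entirely into the language of the monoid $M_E$, using the isomorphism $\phi$ of \eqref{graphmonoid} together with the observation recorded just above the lemma that a finitely generated projective $L_\K(E)$-module is free precisely when the homomorphism $\mathbb N \to \mathcal V(L_\K(E))$, $1 \mapsto [L_\K(E)]$, is surjective. Since $\phi$ is a monoid isomorphism and $\phi(1_E)=[L_\K(E)]$, this homomorphism factors as $\mathbb N \to M_E \xrightarrow{\phi} \mathcal V(L_\K(E))$, where $\mathbb N \to M_E$ sends $1\mapsto 1_E$. Hence surjectivity onto $\mathcal V(L_\K(E))$ is equivalent to surjectivity onto $M_E$, i.e.\ to the assertion that $M_E=\{\, k\,1_E : k\in\mathbb N \,\}$. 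So the first step is to reduce the claim to: every finitely generated projective $L_\K(E)$-module is free if and only if $M_E=\mathbb N\,1_E$.

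The second step is to reduce this global condition $M_E=\mathbb N\,1_E$ to the condition on generators stated in the lemma. The cyclic submonoid $\mathbb N\,1_E\subseteq M_E$ is closed under addition, so any finite sum of elements of $\mathbb N\,1_E$ again lies in $\mathbb N\,1_E$. Since $M_E$ is generated as a monoid by $E^0$, I would argue that $M_E=\mathbb N\,1_E$ holds if and only if each generator $v\in E^0$ already lies in $\mathbb N\,1_E$, that is, $v=k\,1_E$ for some $k\in\mathbb N$. The forward implication is immediate as $v\in M_E$; for the converse, any element of $M_E$ is a finite sum $\sum_i v_i$ of vertices, and substituting $v_i=k_i\,1_E$ shows it equals $\big(\sum_i k_i\big)1_E\in\mathbb N\,1_E$.

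The last step is to upgrade $k\in\mathbb N$ to $0\neq k\in\mathbb N$ in the statement. For this I would use that each vertex is nonzero in $M_E$: indeed $v$ is a nonzero idempotent of $L_\K(E)$, so $vL_\K(E)\neq 0$ and $\phi(v)=[vL_\K(E)]\neq 0$ in $\mathcal V(L_\K(E))$, whence $v\neq 0$ in $M_E$. Consequently, in any equation $v=k\,1_E$ we must have $k\neq 0$, since $k=0$ would force $v=0$. This is the only point requiring genuine care: one must invoke the nonvanishing of each vertex in $M_E$ rather than treat it as automatic. Everything else is a formal transport of the free-iff-surjective criterion across the isomorphism $\phi$, so I expect no serious obstacle beyond keeping the $k\neq 0$ bookkeeping honest.
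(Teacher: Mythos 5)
Your proposal is correct and follows essentially the same route as the paper: the paper derives the lemma exactly by combining the free-iff-surjective observation for $\mathbb N \to \mathcal V(A)$ with the isomorphism $\phi$ of (\ref{graphmonoid}) carrying $1_E$ to $[L_\K(E)]$, which is your factorization argument. Your explicit verification that each vertex is nonzero in $M_E$ (justifying $k\neq 0$) is a detail the paper leaves implicit, but it is correct and does not change the approach.
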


\begin{example}\label{firstexm}
(1) Let $E$ be a graph with one vertex and $n$ loops, where $n\geq 1$. Then $L_\K(E)=L_n$. Since $M_E$ is generated by single vertex $v$, and $1_E=v$, clearly Lemma~\ref{thmfree} holds and thus any finitely generated projective module over $L_n$ is free. 

(2) Let $A=\M_d(L_n)$ be the $d \times d$-matrix algebra over $L_n$, where $n>1$ and $d>1$,  which is a Leavitt path algebra of a graph 
\begin{equation}
\xymatrix@=13pt{
E: u \ar@<1ex>[rr] \ar@{.>}[rr] \ar@<-1ex>[rr]&& v\ar@(u,r) \ar@(ur,dr)  \ar@{.}@(r,d) \ar@(dr,dl),
}
\end{equation}

\medskip 
\noindent consisting of a $d-1$ edges connecting the vertex $u$ to vertex $v$  which itself has  $n$ loops. If all finitely generated projective modules over $A=L_\K(E)$ are free, then by Lemma~\ref{thmfree}, $v=k 1_E$ in $M_E$, where $k>0$. Thus in $M_E$, $v=k(u+v)=k((d-1)v+v)=kdv$, for some $0\not = k\in \mathbb N$.  Applying the Confluence Lemma~\ref{aralem6} to this equality in $M_E$, we obtain $v+i(n-1)v=kdv+j(n-1)v$ in the free monoid $\mathbb F_E$, for some $i> 0$ and $j\geq 0$. This gives that $1+i(n-1)=kd+j(n-1)$. This immediately implies $\gcd(d,n-1)=1$. Thus by \cite[Corollary 6.3.44]{AAS}, $L_\K(E)\cong L_n$ which could be considered as the first test of our Conjecture~\ref{conjfree}. 
\end{example}

\begin{rmk}  By \cite[Corollary 18.36]{lam2} if the unital rings $A$ and $B$ are Morita equivalent and any finitely generated projective $A$-module is free then $B\cong \M_d(A)$, for some $d\geq 1$.  Combining this with  Example~\ref{firstexm}(2), it is enough to prove a weaker version of our Conjecture~\ref{conjfree}: Given a finite graph $E$, if every finitely generated right $L_\K(E)$-module is free, then $L_\K(E)$ is Morita equivalent to the Leavitt ring $L_n$ for some integer $n \geq  1$. Since every finitely generated right $L_\K(E)$-module is free, then \cite[Corollary 18.36]{lam2} implies that $L_\K(E) \cong \M_d(L_n)$ for some integer $d \geq 1$. By Example~\ref{firstexm}(2), we then have that $L_\K(E) \cong L_n$.
\end{rmk}

Recall that a unital ring $A$ is called \emph{purely infinite simple} if $A$ is not a division ring, and for any nonzero $x \in  A$,  there exist $a,b\in A$ such that $axb=1$~\cite[p. 214]{AAS}. For the case of Leavitt path algebras, a complete characterisation of purely infinite simple algebras based on the geometry of associated graphs was among the first published results in this theory~\cite[Theorem 3.1.10]{AAS}.

%We start with the following theorem. 

\begin{thm}\label{thmpurelyinf}
Let $E$ be a finite graph and $L_\K(E)$ its associated Leavitt path algebra. If any  finitely generated projective $L_\K(E)$-module is free then $L_\K(E)$ is one of the following:
\begin{enumerate}[\upshape(i)]
\item $L_\K(E)\cong \K$;

\smallskip

\item $L_\K(E)\cong \K[x,x^{-1}]$;

\smallskip

\item $L_\K(E)$ is a purely infinite simple algebra with $\big(K_0(L_\K(E)), [L_\K(E)]\big) \cong (\mathbb Z/n\mathbb Z, 1)$, for some $0\not = n\in \mathbb N$.
\end{enumerate}

\end{thm}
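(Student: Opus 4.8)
The plan is to read the hypothesis through Lemma~\ref{thmfree} and then run a structural dichotomy on the graph monoid $M_E$. By Lemma~\ref{thmfree}, the assumption that every finitely generated projective $L_\K(E)$-module is free says exactly that each vertex $v\in E^0$ equals $k_v\, 1_E$ in $M_E$ for some positive integer $k_v$. Since $M_E$ is generated as a monoid by its vertices and each vertex lies in the submonoid generated by $u:=1_E$, the monoid $M_E$ is \emph{monogenic}: $M_E=\langle u\rangle$. Here $u\neq 0$ (a finite nonempty graph has $1_E\neq 0$), so by the classification of cyclic commutative monoids $M_E$ is isomorphic either to the free monoid $\mathbb N$, or to a finite cyclic monoid $\langle u\mid mu=(m+p)u\rangle$ with index $m\geq 1$ and period $p\geq 1$. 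I would treat these two cases separately; the second is where the real content lies.

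In the free case $M_E\cong\mathbb N$, I first observe that $E$ must have a single vertex. Indeed, $\mathbb N$ is generated as a monoid only by $1$, so the generator $u=1_E=\sum_{v}v$ must correspond to $1$; as every vertex is nonzero in $M_E$ (a vertex never reduces to the empty word under the relations $\rightarrow_1$), the equality $\sum_v v=1$ in $\mathbb N$ forces $|E^0|=1$. A one-vertex graph is a bouquet of $n$ loops, and $M_E\cong\mathbb N$ holds precisely when $n\in\{0,1\}$ (for $n\geq 2$ one gets the relation $v=nv$, hence a finite monoid). These two subcases give $L_\K(E)\cong\K$ and $L_\K(E)\cong\K[x,x^{-1}]$, i.e. alternatives (i) and (ii).

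In the finite case $M_E\cong\langle u\mid mu=(m+p)u\rangle$, the goal is to show $L_\K(E)$ is purely infinite simple with $K_0\cong\mathbb Z/p\mathbb Z$ and $[L_\K(E)]=1$. For $K_0$ this is immediate: the group completion of $\langle u\mid mu=(m+p)u\rangle$ is $\langle u\mid pu=0\rangle\cong\mathbb Z/p\mathbb Z$, and $[L_\K(E)]=[1_E]=[u]$ maps to the generator $1$; set $n:=p$. For simplicity I would argue on two fronts. First, $M_E$ has no nontrivial order-ideals: any order-ideal containing a nonzero $ju$ contains $u$ (since $ju=u+(j-1)u$ gives $u\leq ju$) and hence all of $M_E$; under the lattice isomorphism between order-ideals of $M_E$ and hereditary saturated subsets of $E^0$, this says $E^0$ has no nontrivial hereditary saturated subset. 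Second, $E$ satisfies Condition~(L): a cycle without an exit would produce a vertex $w$ all of whose multiples $w,2w,3w,\dots$ are pairwise distinct in $M_E$ (reductions along an exit-free cycle replace a vertex by a single vertex and so preserve word length, whence by the Confluence Lemma~\ref{aralem6} these multiples cannot be identified), contradicting the finiteness of $M_E$. Together these give that $L_\K(E)$ is simple; and since $M_E$ is finite and not isomorphic to $\mathbb N$, the algebra cannot be a matrix algebra $\M_d(\K)$ (for which $M_E\cong\mathbb N$). Hence, by the structure theorem for simple Leavitt path algebras of finite graphs \cite[\S 3.1]{AAS}, $L_\K(E)$ is purely infinite simple, which is alternative (iii).

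The main obstacle, and the step I would spend the most care on, is this finite case: turning the purely monoid-theoretic information ``$M_E$ is a finite cyclic monoid'' into the ring-theoretic conclusion ``$L_\K(E)$ is purely infinite simple''. This requires carefully invoking the dictionary between order-ideals of $M_E$ and hereditary saturated subsets of $E^0$, the reduction of Condition~(L) to the finiteness of $M_E$ via the Confluence Lemma~\ref{aralem6}, and the dichotomy for simple Leavitt path algebras of finite graphs that separates finite matrix algebras from purely infinite simple ones. A secondary point needing attention is the classification of monogenic commutative monoids together with the fact that $u=1_E\neq 0$, which is precisely what guarantees that the non-free case is genuinely finite and rules out degenerate quotients.
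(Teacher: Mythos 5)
Your proposal is correct, but it is organized around a different scaffolding than the paper's proof, even though the two share their key monoid-theoretic ingredients. Both arguments start from Lemma~\ref{thmfree}, both prove that $M_E$ has no nontrivial order ideals by the same ``$v\leq k1_E$'' observation, and both kill exit-free cycles (and sinks) by the same word-length/Confluence-Lemma idea. The paper, however, splits on the number of vertices: one-vertex graphs are dispatched directly (giving $\K$, $\K[x,x^{-1}]$ or $L_n$), and for $|E^0|>1$ it rules out sinks and exit-free cycles by noting that $v=k1_E$ is then impossible, invokes the purely-infinite-simple characterization \cite[Theorem~3.1.10]{AAS}, and finally obtains $K_0\cong\mathbb Z/n\mathbb Z$ by exhibiting the epimorphism $\mathbb Z\to K_0(L_\K(E))$, $1\mapsto [L_\K(E)]$, and showing its kernel is nontrivial because $\sum_u k_u>1$. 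You instead observe that $M_E=\langle 1_E\rangle$ is monogenic and run the classification of cyclic commutative monoids: the case $M_E\cong\mathbb N$ forces a single vertex with at most one loop (cases (i)--(ii)), while the finite cyclic case $\langle u\mid mu=(m+p)u\rangle$ yields case (iii). Your endgame differs in two further respects. For pure infinite simplicity you first prove simplicity (trivial order ideals plus Condition~(L)) and then apply the dichotomy for simple Leavitt path algebras of finite graphs, excluding the alternative $\M_d(\K)$ because its $\mathcal V$-monoid is $\mathbb N$ while yours is finite; the paper instead verifies the hypotheses of the purely-infinite-simple criterion directly. For $K_0$ you compute the group completion of the finite cyclic monoid outright, which has the advantage of identifying $n$ explicitly as the period $p$, whereas the paper's kernel argument only establishes $n\neq 0$. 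Both routes are sound; yours leans on two extra standard inputs (the classification of monogenic commutative monoids and the dichotomy theorem from \cite[\S 3.1]{AAS}) in exchange for a cleaner case division and a more explicit description of $K_0$.
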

\begin{proof}
If the algebra is of the form of (i) or (ii), which is a Leavitt path algebra associated to a vertex or a single loop, then it is a PID ring. It is a classical result that 
finitely generated projective modules over PID are free.  Furthermore if the graph $E$ consists of one vertex, and thus $n$ loops, where $n>1$, then $L_\K(E)\cong L_n$, for some $n$ and by Example~\ref{firstexm}(1)   all finitely generated projective modules are free modules. On the other hand we know $L_n, n>1$ is a purely infinite simple algebra~\cite[Remark~3.1.9]{AAS}. 

Suppose now that the graph $E$ consists of more than one vertex.  Let $I$ be a non-zero order ideal of the graph monoid $M_E$.  Then for some $u \in E^0$, 
 $u\in I$. By Lemma~\ref{thmfree}, $k1_E=u \in I$, where $0 \not = k\in \mathbb N$. Thus for any $v\in E^0$, $v\leq k1_E \in I$. Since $I$ is an order ideal, $v\in I$. It follows that $I=M_E$. Therefore $M_E$ has no non-trivial order ideals. Since the order ideals of $M_E$ are in one-to-one correspondence with the hereditary and saturated subsets of $E^0$ (\cite[Proposition 3.6.9]{AAS}), and also with the graded ideals of $L_\K(E)$ (\cite[Theorem~2.5.9]{AAS}), it follows that $L_\K(E)$ is graded simple and the only hereditary and saturated subsets are $\emptyset$ and $E^0$. Next, suppose that a vertex $v\in E^0$ is a sink or a vertex on a cycle without exit. Then  the equality $v =  k1_E$, $k \geq 1$,  is not possible, as either there is no transformation for $v$ (if it is a sink) or any transformation will only generate one vertex, whereas $k1_E=k(\sum_{v\in E^0}v)$ consists of more than one vertex. 
Thus any cycle in the graph has an exit. By~\cite[Theorem 3.1.10]{AAS} this implies that $L_\K(E)$ is purely infinite simple. 

Consider the map 
\begin{align*}
\psi: \mathbb N &\longrightarrow M_E\\
1 & \longmapsto 1_E.
\end{align*}
Since by Lemma~\ref{thmfree}, any vertex $u\in M_E$ is $k_u$ copies of $1_E$, where $k_u>0$, this map is epimorphism. Composing with the isomorphism (\ref{graphmonoid})
and  passing to the group completion,   we obtain an epimorphism (call it $\phi$ again) 
\begin{align}\label{mapsio3}
\phi:\mathbb Z &\longrightarrow K_0(L_\K(E))\\
 1&\longmapsto  [L_\K(E)].\notag
\end{align} 
Since 
\[\phi(1)= [L_\K(E)] =\sum_{u\in E^0} [uL_\K(E)]=\sum_{u\in E^0} k_u [L_\K(E)]=\phi(\sum_{u\in E^0} k_u),\]
and $\sum_{u\in E^0} k_u>1$ as $E^0$ consists of more than one vertex, it follows that the map $\phi$ of (\ref{mapsio3}) does have a non-trivial kernel. 
Hence we obtain an isomorphism $\big(K_0(L_\K(E)), [L_K(E)]\big) \cong (\mathbb Z/n\mathbb Z, 1)$, for some $0\not = n\in \mathbb N$. 
\end{proof}

We are in a position to show Serre's conjecture sits in between the two unanswered questions. 

\begin{thm}\label{calssthm}
If Question~\ref{questionpure} has a positive answer then Conjecture~\ref{conjfree} is true. 
If Conjecture~\ref{conjfree} is true then Question~\ref{questionpure1} has a positive answer.
\end{thm}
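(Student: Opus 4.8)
The plan is to prove the two implications separately, using Theorem~\ref{thmpurelyinf} as the bridge.

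\medskip

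\noindent\textbf{First implication (Question~\ref{questionpure} $\Rightarrow$ Conjecture~\ref{conjfree}).} The nontrivial direction of the conjecture is to show that if every finitely generated projective $L_\K(E)$-module is free, then $L_\K(E)\cong L_n$ for some $n$; the converse is already Example~\ref{firstexm}(1) together with the PID cases $L_0=\K$ and $L_1=\K[x,x^{-1}]$. So I would start from the freeness hypothesis and invoke Theorem~\ref{thmpurelyinf}, which places $L_\K(E)$ into one of three cases. Cases (i) and (ii) give $L_\K(E)\cong L_0$ or $L_1$ directly. The work is in case (iii): here $L_\K(E)$ is purely infinite simple with $\big(K_0(L_\K(E)),[L_\K(E)]\big)\cong(\mathbb Z/n\mathbb Z,1)$. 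The point is that the classical Leavitt algebra $L_{n+1}$ also satisfies $\big(K_0(L_{n+1}),[L_{n+1}]\big)\cong(\mathbb Z/n\mathbb Z,1)$ — indeed $K_0(L_{n+1})=\mathbb Z/n\mathbb Z$ with $[L_{n+1}]=1$, since $M_E$ for a single vertex with $n+1$ loops gives $v=(n+1)v$, i.e. $nv=0$. Since $L_{n+1}$ is itself purely infinite simple, both $L_\K(E)$ and $L_{n+1}$ are purely infinite simple with isomorphic pointed $K_0$-groups, so Question~\ref{questionpure} applies and yields $L_\K(E)\cong L_{n+1}\cong L_m$ with $m=n+1\geq 2$. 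Combined with the PID cases this gives $L_\K(E)\cong L_m$ for some $m\in\mathbb N$, as required.

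\medskip

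\noindent\textbf{Second implication (Conjecture~\ref{conjfree} $\Rightarrow$ Question~\ref{questionpure1}).} Here I would argue that the conjecture forces $L_2$ and ${L_2}_{-}$ to have the \emph{same} answer to the freeness question, and then match them. By Example~\ref{firstexm}(1), every finitely generated projective $L_2$-module is free. I would then compute the relevant monoid data for ${L_2}_{-}$ and check, via Lemma~\ref{thmfree} applied to the graph ${E_2}_{-}$ of~(\ref{cuntzsplice}), that every finitely generated projective ${L_2}_{-}$-module is likewise free; equivalently, that $K_0({L_2}_{-})\cong K_0(L_2)=0$ with $[{L_2}_{-}]$ generating, which is exactly the defining feature of the Cuntz splice (it preserves $K_0$ and the class of the identity). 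Since both algebras are purely infinite simple with all finitely generated projectives free, Conjecture~\ref{conjfree} applies to each and gives $L_2\cong L_{n}$ and ${L_2}_{-}\cong L_{m}$ for some $n,m$. Matching the invariant $\big(K_0,[\,\cdot\,]\big)=(0,1)$ for both pins down $n=m=2$, whence $L_2\cong {L_2}_{-}$.

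\medskip

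\noindent I expect the main obstacle to be the bookkeeping in the second implication: one must verify that all finitely generated projective ${L_2}_{-}$-modules are free by checking the monoid condition of Lemma~\ref{thmfree} on the spliced graph, i.e. that each of the vertices $u,v,w$ equals a positive multiple of $1_{{E_2}_{-}}=u+v+w$ in $M_{{E_2}_{-}}$. This is a Confluence-Lemma computation in the free abelian monoid on $\{u,v,w\}$ using the relations read off from~(\ref{cuntzsplice}), and it is the step where a concrete verification is unavoidable; the remaining identifications then follow formally from the two $K_0$-invariants coinciding and from Theorem~\ref{thmpurelyinf}.
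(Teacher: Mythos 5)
Your first implication is essentially identical to the paper's argument: invoke Theorem~\ref{thmpurelyinf}, observe $\big(K_0(L_{n+1}),[L_{n+1}]\big)\cong(\mathbb Z/n\mathbb Z,1)$, note $L_{n+1}$ is purely infinite simple, and apply Question~\ref{questionpure}. That direction is fine.

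The gap is in the second implication. Its entire content is the verification that every finitely generated projective ${L_2}_{-}$-module is free, i.e.\ (by Lemma~\ref{thmfree}) that each of $u,v,w$ equals a positive multiple of $1_{{E_2}_{-}}$ in the graph monoid $M_{{E_2}_{-}}$ --- and you never carry this out; you only announce that a concrete verification is ``unavoidable''. Moreover, the shortcut you offer in its place is not valid as stated: you claim the freeness condition is ``equivalent'' to $K_0({L_2}_{-})=0$ with $[{L_2}_{-}]$ generating. Lemma~\ref{thmfree} is a condition in the monoid $M_E\cong\mathcal V(L_\K(E))$, and $K_0$ is its group completion, which in general loses information: triviality of $K_0$ does \emph{not} imply that each vertex is a positive multiple of $1_E$ in $M_E$. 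The equivalence does hold for purely infinite simple algebras, where $\mathcal V\setminus\{0\}$ is a group isomorphic to $K_0$, but then you must first prove that ${L_2}_{-}$ is purely infinite simple --- you use this (``Since both algebras are purely infinite simple\dots'') but never establish it --- and you must also justify that the Cuntz splice preserves the pointed $K_0$-group, an external fact you invoke without proof. The paper closes exactly this hole with a short self-contained computation: from the relations (1) $u=u+u+v$, (2) $v=u+v+w$, (3) $w=v+w$ in $M_{{E_2}_{-}}$ it derives directly that $u=v=w=1_{{E_2}_{-}}$, giving freeness via Lemma~\ref{thmfree} with no $K$-theoretic or pure-infiniteness input; the same relations (in particular $u=u+u$) then give $M_{{E_2}_{-}}\cong M_{E_2}$, which pins down ${L_2}_{-}\cong L_2$ once the conjecture yields ${L_2}_{-}\cong L_n$. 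Until you either perform that monoid computation or supply both the pure infinite simplicity of ${L_2}_{-}$ and the $K_0$-invariance of the splice, your second implication is incomplete.
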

\begin{proof}
Suppose Question~\ref{questionpure} has a positive answer and assume that every  finitely generated projective module over $L_\K(E)$ is free. If $L_\K(E)\cong L_n$ for some $n\in \mathbb N$, then  we are done. Thus, suppose that $E$ consists of more than one vertex.  By Theorem~\ref{thmpurelyinf}, $L_\K(E)$ is purely infinite simple with
$\big(K_0(L_\K(E)), [L_\K(E)]\big) \cong (\mathbb Z/n\mathbb Z, 1)$, for some $0\not = n\in \mathbb N$.  Since $\big(K_0(L_{n+1}),[L_{n+1}]\big)\cong (\mathbb Z/n\mathbb Z, 1)$, comparing these two data and  since Question~\ref{questionpure} has a positive answer, it follows that $L_\K(E)\cong L_{n+1}$, for some $n > 0$. 

On the other hand, suppose the Leavitt path algebras for which Serre's conjecture holds are of the form $L_n$ for some  $n \geq 0$. We check directly from the relations that  every  finitely generated projective module over ${L_2}_{-}$  is free. Consider the graph ${E_2}_{-}$ from (\ref{cuntzsplice}) that is associated to ${L_2}_{-}$.  The graph monoid $M_{{E_2}_{-}}$ is generated by the set $\{u,v,w\}$ subject to relations 
\begin{align*}
& (1) \, \, \, u = u+u+v \\
& (2) \, \, \,  v=u+v+w \\   
& (3) \, \, \, w=v+w.\\ 
& \text{Combing (2) and (3) we also have:} \\
 &(4) \, \, \,  v=u+w.
\end{align*}

Set $1_{{E_2}_{-}}=u+v+w$.  Then by (2) we have \[v=1_{{E_2}_{-}}.\] On the other hand 
\[w\stackrel{(3)}{=}v+w \stackrel{(2)}{=} u+v+w+w\stackrel{(1)}{=}u+u+v+v+w+w=u+\underbrace{(u+v+w)}_{(2)}+\underbrace{(v+w)}_{(3)}=u+v+w=1_{{E_2}_{-}}.\]
Hence 
\[u\stackrel{(1)}{=}u+u+v\stackrel{(4)}{=}u+u+u+w\stackrel{(3)}{=}\underbrace{u+w}_{(4)}+\underbrace{u+u+v}_{(1)}=u+v\stackrel{(2)}{=}\underbrace{u+u+v}_{(1)}+w=u+w\stackrel{(3)}{=}1_{{E_2}_{-}}.\]
Thus, by Lemma~\ref{thmfree}, every  finitely generated projective module of ${L_2}_{-}$ is free. Therefore, by assumption,  ${L_2}_{-}\cong L_n$, for some $ n\geq 0$. Since in $M_{{E_2}_{-}}$, we have $u=v=w=1_{{E_2}_{-}}$ and relation (3) gives that $u=u+u$, we therefore have 
$M_{{E_2}_{-}}\cong M_{E_2}$.  By comparing the monoids, we get ${L_2}_{-} \cong L_2$. 
\end{proof}

\begin{example} \label{exam1}
(1)  Let $E$ be the graph 
 
\[ \quad {
\def\labelstyle{\displaystyle}
\xymatrix{ {} & \bullet^{u}  \ar@/^{-10pt}/[rd] \ar@/^{-10pt}/ [ld] &  {} \\
\bullet_{v} \ar@(d,l) \ar@/^{-10pt}/[ru] &  & \bullet_{z} \ar@(d,r)
\ar@/^{-10pt}/ [lu] \\
}}
\]
\smallskip

Then $K_0(L_\K(E))\cong \mathbb Z$ with $[L_K(E)]=0$. The algebra $L_\K(E)$ is a purely infinite simple ring, but the calculation of Theorem~\ref{calssthm} shows not all finitely generated projective modules are free over $L_\K(E)$ (otherwise $K_0(L_\K(E))$ would be torsion). In fact, not all finitely generated projective modules over $L_\K(E)$ are \emph{even} stably free.  Indeed if  for a finitely generated projective $A$-module $P$, we have $[P \oplus A^n]=[A^m]$, where in our setting $[A]=[L_\K(E)]=0$,  then $[P]=0$. So if this happens for every finitely generated projective module $P$, we would have $K_0(L_\K(E))=0$, which is not the case. 

(2) 
It is easy to see that for a unital ring $A$, the group homomorphism $\mathbb Z \rightarrow K_0(A); 1\mapsto [A]$ is surjective if and only if all finitely generated projective modules are stably free. In particular if $K_0(A)=\mathbb Z$ with the order unit $[A]=1$, then all finitely generated projective modules are stably free. If we construct a noncommutative, non-purely infinite simple Leavitt path algebra $L_\K(E)$ with $K_0(L_\K(E))\cong \mathbb Z$ as above, then Theorem~\ref{thmpurelyinf} implies that $A$ has a stably free module which is not free. 

Let $E$ be the graph 

\[ \quad {
\def\labelstyle{\displaystyle}
\xymatrix{ 
\bullet_{v} \ar@(ld,lu) \ar[r] &  \bullet_{z} \ar@(r,u) \ar@(d,r)
}}
\]
\smallskip

An easy application of the Confluence Lemma~\ref{aralem6} shows that $L_\K(E)$ has IBN. Since $L_\K(E)$ is not purely infinite simple, by Theorem~\ref{thmpurelyinf}, not all finitely generated projective modules are free.  However the following calculation in $M_E$ shows all finitely generated projective modules over $L_\K(E)$ are stably free. 
\begin{align*}
&v=v+z=1_E\\
&z+1_E=z+v+z=v+z=1_E. 
\end{align*}
Passing to the monoid $\mathcal V(L_\K(E))$ via the isomorphism~\ref{graphmonoid}, we obtain $vL_\K(E)\cong L_\K(E)$ and $zL_\K(E) \oplus L_\K(E) \cong L_\K(E)$.  
 We know any finitely generated projective $L_\K(E)$-module is generated by $vL_\K(E)$ and $zL_\K(E)$. Since $vL_\K(E)$ is free, the stably free right ideal $zL_\K(E)$ can't be free (otherwise all finitely generated projective modules would be free). Note that $K_0(L_\K(E))=\mathbb Z$ with $[L_\K(E)]=1$. 

\end{example}

\begin{comment}
\begin{thm} \label{mainthm}
Suppose $E$ is a finite graph and $L_K(E)$ its associated Leavitt path algebra. Then every finitely generated projective left $L_K(E)$-module is free if and only if one of the following properties hold:

\begin{enumerate}[\upshape(1)]

\item $L_K(E) \cong K$;

\medskip

\item $L_K(E) \cong K[x,x^{-1}]$

\medskip 

\item $L_K(E)\cong L(1,n)$ for some $n>1$.

\end{enumerate}

\end{thm}

\end{comment}

\begin{example}\label{exam2}
The graded version of Serre's conjecture is too weak to imply that the algebra is graded isomorphic to $L_n$ for some $n \in \mathbb N$.
The following example gives an algebra $L_\K(E)$ where all  finitely generated \emph{graded}  projective $L_\K(E)$-modules are graded free, however $L_\K(E)$ is not graded isomorphic to $L_n$, for some $n \in \mathbb N$. 
Consider the Leavitt path algebra $L_\K(E)$ associated to the graph 

\begin{equation*}
\xymatrix{
E: & u  \ar@(lu,ld)\ar@/^0.9pc/[r] & v. \ar@/^0.9pc/[l]
}
\end{equation*}
\smallskip

Employing the $\mathbb Z$-monoid isomorphism (\ref{grgraphmonoid}), and the relations in the talented monoid $T_E$ (Definition~\ref{def:graphmonoid}(2)), we have 
\[[uL_\K(E)]=\phi(u)=\phi(u(1)+v(1))=[uL_\K(E)(1)]+[vL_\K(E)(1)]=[L_\K(E)(1)].\] In particular we have $\phi(u(1))={}^1\phi(u)={}^1[uL_\K(E)(1)]=[uL_\K(E)(2)]$. Thus 
\[[vL_\K(E)]= \phi(v)=\phi(u(1))=[L_\K(E)(2))].\]   By isomorphism (\ref{grgraphmonoid}) in our setting, all finitely generated graded projective modules over $L_\K(E)$ are generated by some shifts of $uL_\K(E)$ and $vL_K(E)$. Thus the above computations show that  they are all graded free. However $T_E\cong\Big \{ (m,n)\in \mathbb Z \oplus \mathbb Z  \, \, \Big \vert \, \,   \frac{1+\sqrt{5}}{2} m +n \geq 0 \Big \} $,  whereas $\mathcal V^{\gr}(L_n)\cong \mathbb N[1/n]$ (see~\cite[Example 2.5]{HazratLi}) showing that $L_\K(E)$ can't be graded isomorphic to some Leavitt algebra $L_n$, $n\in \mathbb N$.

\end{example}

\begin{rmk}[Serre's conjecture for graph $C^*$-algebras]\label{serrecstar}

 For graph $C^*$-algebras where the concepts of projective and free modules are not formally considered in the
definition of $\mathcal V$-monoids, motivated by Lemma~\ref{thmfree}, we take a monoidal approach to address this question.

For a finite graph $E$, we say that the \emph{Serre's conjecture holds} in $C^*(E)$ if for each $v\in E^{0}$%
, there is a positive integer $k$ such that $v=k1_{E}$ in the graph monoid $M_{E}$.

Given that Question~\ref{questionpure} has a positive answer in the setting of graph $C^*$-algebras (the Kirchberg-Phillips Theorem)~\cite[Theorem~6.3.1, Remark 6.3.2]{AAS}, writing Theorem~\ref{calssthm} mutatis mutandis in the $C^*$-setting, shows that 
for a finite graph $E$, the graph $C^*$-algebra $C^{\ast }(E)$ has the Serre's conjecture property if and only if, for some $n>0$, 
$C^{\ast }(E)$ is isomorphic to the Cuntz algebra  $\mathcal{O}_{n}$ (note that we include also $n=0$ and $n=1$ here). 

We note that a positive answer to the Kirchberg-Phillips question for $C^*$-algebras was obtained by using sophisticated analytical tools and topological methods, using approximations and limits  applicable to a normed vector space~\cite{Ror}. Such tools are not available for Leavitt path algebras making the algebraic Kirchberg-Phillips question an open question.
\end{rmk}

\section{Acknowledgements}
A part of this work was done when the first author was an Alexander von Humboldt Fellow at the University of M\"unster in the winter of 2021. He would like to thank both institutions for an excellent hospitality. The authors would like to express their appreciations to the referees for their valuable comments. We would also like to thank Mark Tomforde for his explanations about the Kirchberg-Phillips Theorem.

\end{document}